\newcommand{\act}{\curvearrowright}
\newcommand{\bb}[1]{\mathbb{#1}}
\newcommand{\cal}[1]{\mathcal{#1}}
\newcommand{\raw}{\rightarrow}
\newcommand{\inv}{^{-1}}
\DeclareMathOperator{\Hom}{Hom}
\DeclareMathOperator{\GL}{GL}
\DeclareMathOperator{\SL}{SL}
\DeclareMathOperator{\sln}{\SL_n\!\bb{C}}
\DeclareMathOperator{\SU}{SU}
\DeclareMathOperator{\Id}{Id}
\newcommand\C{\bb{C}}
\newcommand\G{\Gamma}
\newcommand\HGG{\Hom(\kern 0.05em\G,G\kern 0.05em)}
\newcommand\HGK{\Hom(\G,K)}
\newcommand\HKN{\Hom_\textrm{KN}(\G,G)}
\newcommand\rhob{\rho\left(\left< b \right>\right)}
\newcommand\ctF{\cal{\tilde{F}}}
\newcommand\ctFK{\ctF_K}
\newcommand\embed{\hookrightarrow}
\newcommand\abs[1]{\left\vert {#1} \right\vert}
\newcommand\norm[1]{\left\Vert {#1} \right\Vert}
\newtheorem*{namedtheorem}{\theoremname}
\newcommand{\theoremname}{testing}
\theoremstyle{plain}
\newtheorem{thm}{Theorem}[section]
\newtheorem{lem}[thm]{Lemma}
\newtheorem{prop}[thm]{Proposition}
\newtheorem{cor}[thm]{Corollary}
\theoremstyle{definition}
\title{The topology of Baumslag-Solitar representations}
\author{Maxime Bergeron}
\address{Department of Mathematics, University of Chicago, 5734 S. University Avenue, Chicago IL, 60637, United States of America}
\email{mbergeron@math.uchicago.edu}
\urladdr{http://http://www.math.uchicago.edu/~mbergeron/}
\thanks{M.B.\ was supported by an NSERC postdoctoral fellowship and NSF Award No. DMS-1704692.}
\author{Lior Silberman}
\address{Department of Mathematics, The University of British Columbia, Room 121 - 1984 Mathematics Road, Vancouver BC, V6T 1Z2, Canada}
\email{lior@math.ubc.ca}
\urladdr{http://www.math.ubc.ca/~lior/}
\thanks{L.S.\ was partly supported by an NSERC Discovery Grant}
\begin{document}

\begin{abstract}
Let $\Gamma=\langle a,b \,|\, a b^{\,p} a\inv = b^{\,q}\rangle$ be a
Baumslag--Solitar group and $G$ be a complex reductive algebraic group with 
maximal compact subgroup $K<G$.  We show that, when $p$ and $q$ are relatively prime
with distinct absolute values, there is a strong deformation retraction
retraction of $\HGG$ onto $\HGK$.
\end{abstract}

\subjclass[2010]{Primary 55P99; Secondary 20F19, 20C99, 20G20}
\maketitle

\section{Introduction}
Let $\G$ be a finitely generated group (fix a finite generating set
$S\subset \Gamma$) and let $G$ be a (topological) group.
Identifying a group homomorphism $\rho\colon\G\to G$ with its restriction
$\rho\vert_S$ identifies $\HGG$ with the subset of $G^S$ cut out by the
relations defining $\G$.  In particular, $\HGG$ is a closed subset and we
equip it with the relative topology.  In the case under consideration, $G$
will be a complex linear algebraic group and this identification also endows
$\HGG$ with the structure of a complex affine algebraic variety.

For any topological subgroup $K<G$, the inclusion of $K$ in $G$ induces
a topological inclusion $\HGK\embed\HGG$.  When $K$ is compact, 
$\HGK$ is itself compact and thus more amenable to topological analysis.
In particular, individual elements of $\HGK$ are also often easier to analyze. For example, every unitary representation of $\G$ is completely reducible whereas
this is usually not the case for general linear representations.
For this reason, it is remarkable that in some cases
the inclusion above is in fact a homotopy equivalence, due to the existence
of a strong retraction from $\HGG$ onto $\HGK$. 
This should be contrasted with the fact that, for general finitely generated groups,  rigidity results (e.g. those of Selberg \cite{selberg1963discontinuous}) ensure that this inclusion is not even a bijection of connected components.

    Nevertheless, recent success in producing such retractions has been achieved using a mixture of topological and algebraic tools for classical families of finitely generated groups. Notable positive results include the case of  free-abelian groups  by Pettet--Souto \cite{pettet2013commuting},  torsion-free expanding nilpotent groups by Silberman--Souto \cite{silbermansouto} and general nilpotent groups  by Bergeron \cite{bergeron2015topology}. 

In this paper, we turn our attention to a class of groups that has often served as a testing ground for ideas in geometric group theory: the Baumslag-Solitar groups 
$$
\mathrm{BS}(p,q):=\langle a,b \,|\, a b^{\,p} a\inv = b^{\,q}\rangle.
$$
It is, in some sense, their ``bad behaviour'' that motivated their discovery in the work of  Baumslag--Solitar \cite{baumslag1962some} as the first examples of non-Hopfian residually finite groups. More recently, they also proved to be interesting cases in   Gromov's proposed  classification of finitely generated groups up to quasi-isometry as exhibited by  Farb--Mosher \cite{farb1999quasi,farb1996rigidity} and Whyte \cite{whyte2001large}. 

From our point of view,  analyzing the topology of their representation varieties, we think of the Baumslag-Solitar groups as the simplest interesting infinite groups which are not lattices in Lie groups. Our main result establishes the following:

\begin{thm}
	Let $p$ and $q$ be relatively prime integers with distinct absolute values and consider the Baumslag-Solitar group $$\Gamma=\langle a,b \,|\, a b^{\,p} a\inv = b^{\,q}\rangle.$$ 
	If $G$ is the group of complex points of a reductive algebraic group and $K < G$ is a maximal compact subgroup, then there is a strong deformation retraction of $\HGG$ onto $\HGK$. In particular, the two spaces are homotopy equivalent.
\end{thm}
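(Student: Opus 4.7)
The plan is to construct the strong deformation retraction in three stages, progressively simplifying $B = \rho(b)$ and then $A = \rho(a)$.

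\textbf{Stage 1 (Jordan contraction).} I would write $B = B_sB_u$ for the Jordan decomposition. Uniqueness of the Jordan decomposition applied to $AB^pA\inv = B^q$ yields $AB_s^pA\inv = B_s^q$ and $AB_u^pA\inv = B_u^q$. Since the exponential is a bijection between nilpotent Lie algebra elements and unipotent group elements, the unique unipotent $p$-th root of $B_u^q$ is the fractional power $B_u^{q/p}$, forcing $AB_uA\inv = B_u^{q/p}$. I would then define
\[
\rho_t(a) = A, \qquad \rho_t(b) = B_sB_u^{\,t}, \qquad t \in [0,1],
\]
and check directly from $AB_s^pA\inv = B_s^q$ and $(AB_uA\inv)^{tp} = B_u^{tq}$ that $\rho_t \in \HGG$. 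The homotopy is stationary on $\HGK$ since $K$-valued elements are semisimple ($B_u = 1$), giving a strong deformation retract of $\HGG$ onto the closed subvariety $\HGG^{\mathrm{ss}}$ on which $\rho(b)$ is semisimple.

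\textbf{Stage 2 (Finite-order reduction).} For $\rho \in \HGG^{\mathrm{ss}}$, I would fix a faithful embedding $G \embed \gln$ and let $\lambda_1, \dots, \lambda_n$ denote the eigenvalues of $B$. Equality of characteristic polynomials of $B^p$ and $B^q$ gives a permutation $\sigma$ with $\lambda_i^p = \lambda_{\sigma(i)}^q$; iterating and taking $k$ equal to the order of $\sigma$ produces $\lambda_i^{p^k-q^k} = 1$. Since $|p| \ne |q|$ the exponent is nonzero, so each $\lambda_i$ is a root of unity and $B$ has finite order $n$. Comparing orders of $B^p$ and $B^q$ forces $\gcd(p,n) = \gcd(q,n)$, which combined with $\gcd(p,q) = 1$ forces this common value to equal $1$. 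The relation then upgrades to $ABA\inv = B^r$ for $r \equiv qp\inv \pmod n$, and $A$ normalizes both $\rhob = \langle B \rangle$ and the reductive centralizer $H := Z_G(B)$, whose maximal compact is $H \cap K$.

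\textbf{Stage 3 and the main obstacle.} For each individual $\rho$ the last deformation is clear: writing $A = A_0 h$ with $A_0$ fixed in the $H$-torsor $V = \{g \in G : gBg\inv = B^r\}$ and $h \in H$, the Cartan decomposition $h = k_H \exp(X)$ with $k_H \in H \cap K$ and $X \in i\,\mathrm{Lie}(H \cap K)$ gives a contraction $h_t = k_H \exp(tX)$ that preserves the relation because $k_H$ and $\exp(tX)$ both centralize $B$. Combined with the facts that finite-order semisimple elements of $G$ are $G$-conjugate into $K$ and (by a Mostow-type theorem) that $G$-conjugacy of elements of $K$ coincides with $K$-conjugacy, this produces a pointwise deformation of each $\rho$ into $\HGK$. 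The main obstacle is globalizing these pointwise constructions into a single continuous strong deformation retraction: the torsor $V$, the integer $n$, and the exponent $r$ all depend on $\rho$ and need not be locally constant across $\HGG^{\mathrm{ss}}$. I expect the Kempf--Ness gradient flow---signaled by the paper's notation $\HKN$---to be the correct global machinery: it provides a $K$-equivariant strong deformation retraction of $\HGG$ onto the polystable (closed-orbit) locus, whose completely reducible representations admit the requisite coordinated simultaneous conjugation of $(A,B)$ into $K$.
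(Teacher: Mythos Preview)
Your Stage~1 is an interesting idea that the paper does not use, but you have skipped the key point: the Jordan decomposition $B\mapsto(B_s,B_u)$ is \emph{not} continuous on $G$ in general, so you must explain why $(\rho,t)\mapsto B_sB_u^{\,t}$ is continuous on $\HGG\times[0,1]$.  In fact this can be repaired: the eigenvalue argument you give in Stage~2 uses only the characteristic polynomial and hence applies to \emph{all} $\rho\in\HGG$, not just those with $\rho(b)$ semisimple.  It follows that the eigenvalues of $\rho(b)$ always lie in a fixed finite set of roots of unity, and on matrices with spectrum in a fixed finite set the spectral projections (hence $B_s$ and $B_u$) do vary continuously.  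You should say this explicitly.

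The real gap is Stage~3.  You correctly identify the obstacle---the order $n$, the exponent $r$, the torsor $V$, and the centralizer $H$ all jump as $\rho$ varies---but you do not resolve it.  Invoking the Kempf--Ness flow does give a strong deformation retraction onto the closed-orbit locus $\HKN$, but that is only the \emph{first} step of the paper's proof, not the last: on $\HKN$ one still has the same globalization problem for the Cartan contraction of $\rho(a)$.  (Note also that once you apply Kempf--Ness your Stage~1 becomes redundant, since every $\rho\in\HKN$ already has $\rho(b)$ semisimple, indeed of finite order.)  The paper handles the globalization by organizing $\HKN$ as a locally trivial fiber bundle over a finite-dimensional manifold $\cal{F}$ of finite abelian subgroups of $G$ via $\rho\mapsto\rho(\langle b\rangle)$, showing that $\cal{F}$ deformation retracts onto its $K$-counterpart $\cal{F}_K$, and then producing a fiberwise covering map $p_\ast$ from $\HKN$ to a bundle whose fibers are the semisimple loci $(G_H)_s$ of reductive quotients $G_H=N_G(H)/H$.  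The fiberwise retraction of $(G_H)_s$ onto its maximal compact $K_H$ then lifts through $p_\ast$ and globalizes via the bundle structure.  Your sentence ``admit the requisite coordinated simultaneous conjugation of $(A,B)$ into $K$'' is exactly the content of this machinery, and without it the proof is incomplete.
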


We conclude this introduction with a short outline of the proof:
\begin{enumerate}
\item An application of Kempf--Ness theory provides a retraction of $\HGG$ to a subset
      $\HKN$ which, in particular, consists of completely reducible
      representations.
\item We  produce an integer $O$ (depending only on $\G$ and $G$) such that, for every
      representation $\rho\in\HKN$, the abelian group $\rhob$ is finite and of order at most $O$.
\item Letting $\ctF$ (respectively \ $\ctFK$) denote the manifold of subgroups of $G$
      (respectively of $K$) of order at most $O$, we show  that 
$$
\HKN\raw\cal{\tilde{F}},\,\,\,\rho\mapsto\rho(\langle b\rangle)
$$
      is a fiber bundle onto a union of connected components.
      Since the inclusion of $\tilde{\cal{F}}_K$ into $\tilde{\cal{F}}$ is
      a homotopy equivalence, it follows that $\HKN$ retracts 
      onto the subset consisting of those $\rho$ with
      $\rhob\subset K$.
\item Each fiber of the bundle is the domain of  a natural covering map onto a subset of the semisimple elements of a reductive group $G'$ which admits a deformation retraction into a compact
      subgroup $K'\subset G'$. Lifting this retraction to the fibers, we obtain the desired retraction of $\HKN$ onto $\Hom(\Gamma,K)$.

\end{enumerate}
\textbf{Acknowledgements.}
The first named author would like to thank Juan Souto for enlightening discussions about the topology of representation spaces and Benson Farb for inspiring discussions about Baumslag-Solitar groups. 

\section{Background on Reductive Algebraic Groups}

An (affine) \emph{algebraic group} is, properly, an (affine) variety endowed
with a group law for which the group operations are morphisms of varieties,
i.e., polynomial maps.  For our purposes, we will use the terms ``affine variety"
to mean an affine algebraic set (the zero locus of a family of complex
polynomials in several variables) and  ``algebraic group" to mean the 
set of complex points of the affine algebraic group.
It turns out that affine algebraic groups are \emph{linear} so we may (and
always will) identify them with a Zariski closed subgroup of $\sln$
(see, for instance, Springer \cite{Springer:LinearAlgGps}).  

Given a compact Lie group $K$, the Peter-Weyl Theorem provides a
faithful embedding $K\hookrightarrow \GL_n\!\bb{R}$ for some $n$.
Identifying $K$ with its image realizes it as a real algebraic subgroup
of $\GL_n\!\bb{R}$. We then define the \emph{complexification} $G:=K_\bb{C}$ to be the vanishing locus in $\GL_n\!\bb{C}$ of the ideal defining $K$. The group $G$ is a complex algebraic group which is independent  up to isomorphism of the embedding provided by the Peter-Weyl Theorem. A complex linear algebraic group $G$ is \emph{reductive} if and only if it is the complexification of a compact Lie group $K$ as above. In this case, $K$ is always a maximal compact subgroup $G$. We summarize some well-known facts in the following (see, for instance, Helgarson \cite{helgason2001differential}):

\begin{prop}\label{symmetric space}
Let $\cal{S}(G)$ denote the set of maximal compact subgroups of $G$.
This set admits the structure of a connected non-positively curves symmetric
space in such a way that the action
$$
G\act \cal{S}(G),\,\,\,(g,K)\mapsto gKg\inv
$$
is a smooth isometric action. In particular:
\begin{enumerate}
	\item any two maximal compact subgroups of $G$ are conjugated by an element in the identity component $G^\circ$, and
	\item if a compact subgroup $H\subset G$ normalizes a maximal compact subgroup\\ $K\in\cal{S}(G)$, then $H\subset K$. \qed
\end{enumerate}
	\end{prop}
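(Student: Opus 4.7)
The strategy is to identify $\cal{S}(G)$ with the Riemannian symmetric space $G^\circ/K_0^\circ$, where $K_0 \in \cal{S}(G)$ is a fixed maximal compact subgroup and $K_0^\circ := K_0 \cap G^\circ$.  Via the Peter--Weyl embedding, realize $K_0^\circ \subset \U(n)$ with $G^\circ \subset \gln$ its complexification.  The Cartan involution $\theta(g) = (g^*)\inv$ fixes $K_0^\circ$ pointwise and yields the decomposition $\fk{g}^\circ = \fk{k}_0 \oplus \fk{p}$ with $\fk{p} = i\fk{k}_0$.  The polar decomposition provides a diffeomorphism $K_0^\circ \times \fk{p} \to G^\circ$, $(k,X) \mapsto k\exp(X)$; equipping $\fk{p}$ with the inner product $\langle X,Y\rangle = \mathrm{tr}(XY^*)$ then induces a $G^\circ$-invariant Riemannian metric on $G^\circ/K_0^\circ$, making it a globally symmetric space of non-compact type which, by the Cartan--Hadamard theorem, is simply connected and non-positively curved.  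Conjugation by $K_0$ preserves this inner product on $\fk{p}$, so the $G^\circ$-action extends to a smooth isometric action of $G = G^\circ K_0$, using that a maximal compact meets every connected component.

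The central step is transitivity of the resulting $G$-action on $\cal{S}(G)$ by conjugation.  Applying the Cartan fixed point theorem to a compact subgroup $H \subset G^\circ$ acting by left translation on the Hadamard manifold $G^\circ/K_0^\circ$ produces a fixed coset $gK_0^\circ$, whence $g\inv H g \subseteq K_0^\circ$.  Consequently, every compact subgroup of $G^\circ$ is $G^\circ$-conjugate into $K_0^\circ$, and every maximal compact subgroup of $G^\circ$ is conjugate to $K_0^\circ$.  Combined with $G = G^\circ K_0$, a short diagonal argument shows that every $K \in \cal{S}(G)$ is $G^\circ$-conjugate to $K_0$, proving (1) and transferring the symmetric space structure from $G^\circ/K_0^\circ$ to $\cal{S}(G)$.

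Finally, (2) is a direct consequence of maximality: if $H \subset G$ is compact and normalizes $K \in \cal{S}(G)$, then $HK$ is a subgroup (by normality) which is compact as the continuous image of $H \times K$ under multiplication, so by maximality $HK = K$ and hence $H \subseteq K$.  The main obstacle is the transitivity step — reconciling the Cartan fixed point theorem with the possible disconnectedness of $G$ — but this is classical material, and I would simply invoke the Helgason reference already indicated by the authors.
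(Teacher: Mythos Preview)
Your outline is correct and is precisely the standard argument one finds in the reference the authors cite; the paper itself gives no proof of this proposition, treating it as well-known background (note the \textsf{qed} symbol placed at the end of the statement and the preceding sentence ``We summarize some well-known facts in the following (see, for instance, Helgason \ldots)''). Your sketch of (2) via the compactness of $HK$ and maximality is clean, and your identification of $\cal{S}(G)$ with $G^\circ/K_0^\circ$ together with the Cartan fixed-point argument is exactly how one proves (1); you are also right to flag that the passage from the connected to the disconnected case requires the extra input $G = G^\circ K_0$ and a small amount of care, and deferring that to Helgason is entirely in the spirit of the paper.
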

Let us denote the centralizer and normalizer of a subgroup $H\subset G$ by 
$$N_G(H):=\{g\in G\,|\,gHg\inv=H\},\text{ and }Z_G(H):=\{g\in G\,|\,\forall h\in H:ghg\inv=h\}.$$ We will also make use of the subset $N_{G^\circ}(H):=G^\circ\cap N_G(H)$.  
We record the following possibly non-standard facts for later use:

\begin{lem}[Pettet-Souto \cite{pettet2013commuting}, Lemma 4.4] \label{normalizer homotopy}
	Let $G$ and $K$ be as above.  If $H\subset K$ is a subgroup, then  the natural   inclusion between normalizers $$N_K(H)\hookrightarrow N_G(H)$$ $$N_{K^\circ}(H)\hookrightarrow N_{G^\circ}(H)$$ are homotopy equivalences.\qed
\end{lem}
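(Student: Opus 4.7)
The plan is to exploit the Cartan involution $\Theta\colon G \to G$ attached to the pair $(G,K)$. Recall that $\Theta$ is the Lie group automorphism of $G$ arising from the Cartan decomposition $\fk{g} = \fk{k} \oplus \fk{p}$ (with $\fk{p} = i\fk{k}$); its fixed-point set is exactly $K$, and under the global polar diffeomorphism $K \times \fk{p} \to G$, $(k,X) \mapsto k\exp(X)$, it acts by $(k,X) \mapsto (k,-X)$. Since in the intended application $H$ will be finite, we may assume that $H$ is closed in $K$, so that $N_G(H)$ is a closed Lie subgroup of $G$.

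The key observation is that $N_G(H)$ is $\Theta$-invariant. Because $H\subset K$, the involution $\Theta$ fixes $H$ pointwise; applying the automorphism $\Theta$ to the defining equation $gHg\inv = H$ yields $\Theta(g)H\Theta(g)\inv = H$, so $\Theta(N_G(H)) = N_G(H)$. Since $G^\circ$ is $\Theta$-stable (as the connected component of the identity under a continuous automorphism), so is $N_{G^\circ}(H) = G^\circ \cap N_G(H)$.

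From here we invoke the standard fact that any closed $\Theta$-invariant subgroup $L \subset G$ inherits a compatible polar decomposition $L = (L\cap K)\cdot\exp(\fk{l}\cap\fk{p})$, where $\fk{l} := \text{Lie}(L)$, and that the scaling map $(k\exp(X), t)\mapsto k\exp(tX)$ for $t\in[0,1]$ provides a strong deformation retraction of $L$ onto $L\cap K$. Applied to $L = N_G(H)$ this yields the first homotopy equivalence, since $N_G(H)\cap K = N_K(H)$. For the identity-component statement we apply the same argument to $L = N_{G^\circ}(H)$ to obtain $L\cap K = G^\circ \cap N_K(H)$; this agrees with $N_{K^\circ}(H) = K^\circ\cap N_K(H)$ because the global polar decomposition of $G$ induces a bijection $\pi_0(K)\to\pi_0(G)$, forcing $G^\circ\cap K = K^\circ$. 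The main technical content is the polar decomposition for $\Theta$-invariant closed subgroups: one must check that the Lie algebra of $N_G(H)$ is $\theta$-stable (which is immediate from $\Theta$-invariance) and then that the global polar diffeomorphism of $G$ restricts to one of $L$. This is a classical consequence of reductive structure theory, which is why the lemma can be cited in the form stated following Pettet--Souto.
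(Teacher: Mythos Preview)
Your argument is correct. The paper itself gives no proof of this lemma---it is simply quoted from Pettet--Souto with a terminal \qed---so there is nothing to compare against directly. Your approach via the Cartan involution $\Theta$ is the standard one: once you observe that $N_G(H)$ (and $N_{G^\circ}(H)$) is $\Theta$-stable because $\Theta$ fixes $H\subset K$ pointwise, the polar decomposition for closed $\Theta$-stable subgroups immediately produces the deformation retraction onto the $K$-part. The identification $G^\circ\cap K = K^\circ$ you use for the connected-component version is also correct and follows from the global Cartan decomposition as you indicate. This is almost certainly the argument behind the cited reference as well, so in effect you have supplied the proof the paper chose to outsource.
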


\begin{cor} \label{maximal compact normalizer}
	For every abelian $A\subset K$,  $N_K(A)$ is  maximal compact  in  $N_G(A)$.
\end{cor}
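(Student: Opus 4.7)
The plan is to combine Lemma~\ref{normalizer homotopy} with a Cartan-decomposition argument to force $N_K(A)$ to be maximal compact in $N_G(A)$.

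Let $H$ be any compact subgroup of $N_G(A)$ containing $N_K(A)$; I will show $H=N_K(A)$. Because $A\subset K=G^\theta$, the normalizer $N_G(A)$ is stable under the Cartan involution $\theta:G\to G$: for $g\in N_G(A)$,
$\theta(g)\,A\,\theta(g)^{-1}=\theta(gAg^{-1})=\theta(A)=A$, so $\theta(g)\in N_G(A)$. By Mostow's theorem on self-adjoint subgroups, this $\theta$-stability yields a Cartan decomposition $N_G(A)=K''\cdot\exp(\mathfrak n\cap i\mathfrak k)$, where $\mathfrak n$ is the Lie algebra of $N_G(A)$ and $K''$ is a maximal compact subgroup of $N_G(A)$; in particular, the inclusion $K''\hookrightarrow N_G(A)$ is a deformation retract, and any compact subgroup of $N_G(A)$ is contained in some maximal compact.

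Now choose a maximal compact $K''$ of $N_G(A)$ containing $H$, so $K''\supseteq H\supseteq N_K(A)$. By Lemma~\ref{normalizer homotopy}, $N_K(A)\hookrightarrow N_G(A)$ is a homotopy equivalence; combined with the retraction of $N_G(A)$ onto $K''$, the inclusion $N_K(A)\hookrightarrow K''$ is itself a homotopy equivalence of compact Lie groups. But a homotopy equivalence between compact Lie groups that is realized as an inclusion is an equality: the dimensions agree (forcing the identity components to coincide) and the induced bijection on $\pi_0$ then forces the two groups to match. Hence $N_K(A)=K''\supseteq H$, giving $H=N_K(A)$.

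The main obstacle is the appeal to Mostow's theorem, which is classically stated for real algebraic subgroups. For arbitrary abelian $A\subset K$ the normalizer $N_G(A)$ need not be algebraic, but in the application to the main theorem $A=\rho(\langle b\rangle)$ will be finite (by step (2) of the introductory outline), making $N_G(A)$ cut out by finitely many polynomial conditions so the classical form of Mostow applies directly. For the fully general statement one may appeal to the extended version of Mostow's theorem for $\theta$-stable closed subgroups.
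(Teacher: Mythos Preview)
Your argument is correct and follows essentially the same path as the paper's: embed $N_K(A)$ in some maximal compact subgroup of $N_G(A)$, invoke Lemma~\ref{normalizer homotopy} to see that the inclusion of $N_K(A)$ into that maximal compact is a homotopy equivalence, and conclude equality because an inclusion of closed manifolds which is a homotopy equivalence must be surjective. The only genuine difference is how you justify that $N_G(A)$ has enough structure for maximal compact subgroups to exist and to be deformation retracts: the paper shows $N_G(A)$ is \emph{reductive} by observing that $A\subset K$ is diagonalizable, so $N_G(A)^\circ=Z_G(A)^\circ$, and the latter is reductive by Richardson's theorem; you instead use $\theta$-stability of $N_G(A)$ together with Mostow's theorem. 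Both routes are standard, and your closing caveat about the hypotheses of Mostow is honest---in every application in the paper $A$ is finite, so neither argument runs into trouble.

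One small observation: your Mostow route actually renders the appeal to Lemma~\ref{normalizer homotopy} superfluous. Mostow's decomposition of a $\theta$-stable closed subgroup $L\subset G$ reads $L=(L\cap K)\cdot\exp(\mathfrak{l}\cap\mathfrak{p})$, with $L\cap K$ already a maximal compact subgroup of $L$. Applied to $L=N_G(A)$ this says directly that $N_G(A)\cap K=N_K(A)$ is maximal compact, and you are done without the homotopy comparison. (You implicitly use this when you say ``any compact subgroup of $N_G(A)$ is contained in some maximal compact'', but then reintroduce a separate $K''$ and argue via Lemma~\ref{normalizer homotopy}; the detour is harmless but unnecessary.)
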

\begin{proof}
	We first observe that $N_G(A)$ is reductive. Indeed, since $A$ is diagonalizable,  the identity component of the normalizer $N_G(A)^\circ$ coincides with the identity component of the centralizer $Z_G(A)^\circ$ which is itself reductive by a result of Richardson \cite[Theorem 6.1]{richardson1988conjugacy}.
	
	Now, the compact subgroup $N_K(A)\subset N_G(A)$ is contained in some maximal compact subgroup $\kappa\subset N_G(A)$. Here, it follows from Lemma \ref{normalizer homotopy} and the fact that $N_G(A)$ is reductive that the inclusions $N_K(A)\hookrightarrow N_G(A)$ and $\kappa\hookrightarrow N_G(A)$ are homotopy equivalences. This implies that the inclusion of $N_K(A)$ into $\kappa$ is also a homotopy equivalence.   Since the latter two are closed manifolds, this implies that the inclusion is surjective, meaning that $\kappa= N_K(A)$ after all. \end{proof}

Finally, let $V$ be a complex vector space and consider a $G$-variety
$X\subset V$, i.e., suppose that $G$ acts on $X\subset V$ via an
embedding $G\subset \GL(V)$. If $K\subset G$ is a maximal compact subgroup,
we may assume without loss of generality that $V$ is equipped with a
$K$-invariant Hermitian norm $\norm{\cdot}$.  The restriction of this norm
to $X$ provides a gateway to analyze its topology through a package of
results known as \emph{Kempf--Ness Theory}
(see \cite[Section 3]{bergeron2015topology} for
an introduction and \cite[Chapter 5]{schwarz2000theorie} for the details).
To state the results we need, write $\mathcal{M}$ for the
\emph{Kempf--Ness Set} of minimal vectors in $X$:
$$\mathcal{M}:=\left\{x\in X: \forall g\in G, \norm{x}\leq\norm{g\cdot x}\right\}\,.$$
Then:
\begin{enumerate}
	\item there is a strong deformation retraction of $X$ onto $\cal{M}$, and
	\item the $G$-orbit of every $x\in\cal{M}$ is closed. 
\end{enumerate}

\section{The Main Theorem}
Let $p$ and $q$ be relatively prime integers with $|p|\neq|q|$. We consider the Baumslag-Solitar group $$\Gamma=\langle a,b \,|\, a b^{\,p} a\inv = b^{\,q}\rangle.$$ Let us also fix once and for all a complex reductive algebraic group $G$ along with a maximal compact subgroup $K\subset G$.

As explained in the introduction, the image of $\HGG$ in $G\times G$ under
the injective map
$$\rho\mapsto\left(\rho(a),\rho(b)\right)$$
is the closed subset cut out by the relation
$$\rho(a)\rho(b)^p\rho(a)^{-1} = \rho(b)^q\,.$$
This simultaneously endows $\HGG$ with compatible structures of an affine
algebraic variety (when $G$ is viewed as an affine algebraic group)
and a Hausdorff topological space (when $G$ is viewed as a Lie group).
It is well known (see, for instance, \cite{lubotzky1985varieties}) and
otherwise easy to see that, in both points of view, the geometric
structures obtained from distinct presentations of the same group $\Gamma$
are isomorphic. The choice of presentation is thus immaterial.

If $K\subset G$ is a maximal compact subgroup, the inclusion $K\embed G$
induces is a natural inclusion $\HGK\embed\HGG$ and we would like to show that
there is a deformation retruction of the latter space into the former.

\subsection{Kempf--Ness Theory} \label{applying KN}
In this section we will take the first step of our argument, obtaining
an intermediate deformation retract  between $\HGK$ and $\HGG$.

For this, observe that the action of $G$ on itself by conjugation induces
an action of $G$ on $\HGG$ via
$\left(g\cdot \rho\right)(\gamma) = g\rho(\gamma)g^{-1}$.  In terms of our
embedding this is simply the restriction of the diagonal action of $G$ on
$G\times G$ to the subspace/subvariety $\HGG$.  Fixing an embedding of $G$
in $\sln$, this allows us to view
$$\HGG\subset G^2\subset (M_n\bb{C})^2\cong \bb{C}^{2n^2}$$
as a $G$-variety.
Endowing the ambient vector space with a $K$-invariant norm (for example, we
may assume the embedding maps $K$ to $\SU(n)$ and take the Hilbert--Schmid norm
on $M_n(\C)$), we obtain a retraction of $\HGG$ onto the associated
Kempf--Ness set $\HKN$, given concretely as the set:
$$\left\{\rho\colon\Gamma\to G \mid \forall g\in G:
\norm{g\rho(a)g^{-1}}_\textrm{HS}^2+ \norm{g\rho(b)g^{-1}}_\textrm{HS}^2
\geq \norm{\rho(a)}_\textrm{HS}^2+ \norm{\rho(b)}_\textrm{HS}^2 \right\}\,.$$

\subsection{Manifolds of Finite Abelian Groups}\label{manifolds of finite abelian groups}
Keeping the notation as above, let $B$ denote the subgroup of $\Gamma$
generated by $b$. Our goal in this section is to realize the Kempf--Ness set
$\HKN$ as the total space of a bundle over a manifold of
finite abelian subgroups of $G$ (see also Bergeron-Silberman
 \cite[Proposition 2.1]{bergeron2016note} and Pettet-Souto \cite[Section 3.4]{pettet2013commuting}). 
\begin{prop} \label{bundle}
	The set of abelian groups
$$\cal{F}:=\left\{\rho(B)\subset G:\rho\in \HKN\right\}$$
(clearly invariant under conjugation by $G$) admits a manifold structure with
finitely many connected components such that the $G$-action is smooth and
 the projection map
	$$
	\pi:\HKN\raw\cal{F},\quad \pi(\rho):=\rho(B)
	$$
is a locally trivial fiber bundle. Moreover, if $H\in\cal{F}$, the image of a representation $\rho\in\pi\inv(H)$ is contained in the normalizer $N_G(H)$.
\end{prop}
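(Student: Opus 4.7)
My plan is to handle the three pieces of the proposition in sequence: first the normalizer containment, which is purely algebraic; then the manifold structure on $\cal{F}$ with its $G$-action, which follows from the uniform bound $O$ and a Jordan-type finiteness; and finally the local triviality of $\pi$, which is the technical core.

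For the normalizer containment, suppose $\rho\in\pi\inv(H)$, so $H=\langle\rho(b)\rangle$ is cyclic of some order $n$. The relation $\rho(a)\rho(b)^p\rho(a)\inv=\rho(b)^q$ implies $\rho(b)^p$ and $\rho(b)^q$ have equal order in $H$, hence $\gcd(n,p)=\gcd(n,q)$; since $\gcd(p,q)=1$ this common value must be $1$, so $\rho(b)^p$ generates $H$. Choosing $k$ with $kp\equiv 1\pmod{n}$ yields
$$
\rho(a)\rho(b)\rho(a)\inv=\rho(a)\rho(b)^{kp}\rho(a)\inv=\rho(b)^{kq}\in H,
$$
so $\rho(a)\in N_G(H)$ and the image of $\rho$ is contained in $N_G(H)$.

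For the manifold structure, step (2) of the outline supplies a uniform bound $O$ on $|\rho(B)|$, and a classical Jordan-type finiteness result ensures that $G$ contains only finitely many conjugacy classes of finite subgroups of order at most $O$. Each such class is a smooth homogeneous space $G/N_G(H_0)$, so the disjoint union $\ctF$ is a smooth $G$-manifold on which $G$ acts smoothly by conjugation. Since $\cal{F}$ is $G$-invariant, it is a union of conjugacy classes (hence a union of connected components of $\ctF$), inherits the manifold structure, has finitely many components, and carries a smooth $G$-action. Fixing $H_0\in\cal{F}$, a smooth local section $\sigma:U\to G$ of the principal bundle $G\to G/N_G(H_0)\simeq G\cdot H_0$ with $\sigma(H)H_0\sigma(H)\inv=H$ gives a candidate trivialization
$$
\Phi:U\times\pi\inv(H_0)\longrightarrow\pi\inv(U),\qquad (H,\rho)\longmapsto\sigma(H)\cdot\rho,
$$
with formal continuous inverse $\tau\mapsto(\pi(\tau),\,\sigma(\pi(\tau))\inv\cdot\tau)$.

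The technical crux is verifying that $\Phi$ actually lands in $\HKN$: the Kempf--Ness set is $K$-invariant but not $G$-invariant, so naively conjugating $\rho\in\HKN$ by $\sigma(H)\in G$ need not yield an element of $\HKN$. My plan to overcome this combines two observations. First, the normalizer containment above shows that every $\rho\in\pi\inv(H_0)$ factors through the reductive subgroup $N_G(H_0)$. Second, Corollary~\ref{maximal compact normalizer} identifies $N_K(H_0)$ as a maximal compact subgroup of $N_G(H_0)$, so the Kempf--Ness setup restricts naturally from the $(G,K)$-action on $\HGG$ to the $(N_G(H_0),N_K(H_0))$-action on $\Hom(\Gamma,N_G(H_0))$. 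Thus the fiber $\pi\inv(H_0)$ may be identified with a Kempf--Ness set for this smaller reductive group, and the bundle over $G\cdot H_0$ arises from the associated-bundle construction $G\times_{N_G(H_0)}\pi\inv(H_0)$, sidestepping the $G$-invariance issue.
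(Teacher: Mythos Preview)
Your treatment of the normalizer containment is clean and more direct than the paper's route: the paper derives normality of $\rho(B)$ in $\rho(\G)$ as a byproduct of Lemma~\ref{uniform bound} (via Zariski closures and the argument of Lemma~\ref{lem:rhoGamma-solvable}), whereas your elementary computation with $kp\equiv 1\pmod n$ is self-contained. The manifold structure on $\ctF$ is handled essentially as in the paper, modulo the cosmetic difference that the paper works with $G^\circ$-orbits $G^\circ/N_{G^\circ}(H)$ rather than full $G$-orbits so that each orbit is connected.

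The genuine gap is in your handling of local triviality. You correctly observe that the Kempf--Ness set $\HKN$ is only $K$-invariant, so the naive trivialization $\Phi(H,\rho)=\sigma(H)\cdot\rho$ need not land in $\HKN$. But your proposed fix does not resolve this. First, the identification of the fibre $\pi\inv(H_0)$ with a Kempf--Ness set for $N_G(H_0)$ is not justified: minimality of $\norm{\rho}$ over the full $G$-orbit certainly implies minimality over the smaller $N_G(H_0)$-orbit, but the converse fails in general, so there is no reason for the two sets to coincide. Second, and more seriously, the associated-bundle construction $G\times_{N_G(H_0)}\pi\inv(H_0)$ has exactly the defect you set out to avoid: for the twisted product to make sense one needs $N_G(H_0)$ to act on $\pi\inv(H_0)$ by conjugation, and for $n\in N_G(H_0)\setminus K$ there is no reason for $n\cdot\rho$ to remain a minimal vector. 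So the quotient is not even well defined with this fibre, and nothing has been sidestepped.

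For comparison, the paper's proof of local triviality is a single sentence asserting the twisted-product identification $\cal{H}\cong (G^\circ\times\cal{H}_H)/N_{G^\circ}(H)$, without addressing the point you raise at all. You have therefore located a real subtlety that the paper passes over, but you have not repaired it; a correct argument would need either a description of the fibre that is genuinely stable under $N_{G^\circ}(H)$-conjugation, or a reason specific to this situation why conjugation by $N_{G^\circ}(H)$ preserves the Kempf--Ness condition on $\pi\inv(H)$.
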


The following two Lemmas and their proofs are inspired by the work
of McLaury \cite{mclaury2012irreducible} on irreducible linear representations
of Baumslag--Solitar groups.  We quote Theorem 3.2 of his paper as
Lemma \ref{lem:rhoGamma-solvable} below but give a full proof since there
are minor errors in the argument given in \cite{mclaury2012irreducible}.
Lemma \ref{uniform bound} then generalizes Theorems 3.3 and 5.1 of  \cite{mclaury2012irreducible} 
to our context of representations in general reductive groups lying
in the Kempf--Ness set.  These Lemmas are the only places in the paper
where we directly use the hypothesis that $p$ and $q$ are relatively prime.

\begin{lem}\label{lem:rhoGamma-solvable}
Let $G$ be a linear algebraic group and let $\Gamma$
be a Baumslag--Solitar group with $p$ and $q$ relatively prime.
If $\rho\in\HGG$, then $\rho(\Gamma)$ is solvable.
\end{lem}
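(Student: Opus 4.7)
The plan is to reduce to the case of irreducible $\rho$ and then use the Baumslag--Solitar relation to force the eigenspaces of $B=\rho(b)$ into a shape that puts $\rho(\Gamma)$ in a monomial group. First, if $|p|=|q|$ then $\gcd(p,q)=1$ forces $|p|=|q|=1$, and $\Gamma$ itself is solvable (either $\bb{Z}^2$ or the Klein bottle group), so there is nothing to prove. Assume $|p|\neq|q|$ henceforth and induct on $n=\dim V$: a proper $\Gamma$-invariant subspace $W\subset V$ gives two smaller representations on $W$ and $V/W$, whose images are solvable by induction, while the kernel of $\rho(\Gamma)\to\GL(W)\times\GL(V/W)$ sits in the abelian unipotent block $\Hom(V/W,W)$, so $\rho(\Gamma)$ is an extension of solvable by abelian.

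Assume now that $\rho$ is irreducible. The first step is to show that $B$ is semisimple. Writing $B=B_sB_u$ for the Jordan decomposition and applying uniqueness to the relation $AB^pA^{-1}=B^q$ yields $AB_u^pA^{-1}=B_u^q$. For a unipotent $U=I+N$ and a non-zero integer $k$ one has $U^k-I=kN\cdot U'$ with $U'$ unipotent, so $\ker(U^k-I)=\ker N=\ker(U-I)$. Setting $V^U:=\ker(U-I)$ this gives the chain
$$
AV^{B_u}=AV^{B_u^p}=V^{AB_u^pA^{-1}}=V^{B_u^q}=V^{B_u},
$$
so $V^{B_u}$ is $A$-invariant; it is also $B$-invariant (as $B$ commutes with $B_u$) and non-zero, so irreducibility forces $V^{B_u}=V$, that is, $B_u=I$.

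Now decompose $V=\bigoplus_{\lambda\in\Lambda}V_\lambda$ into $B$-eigenspaces. The conjugacy of $B^p$ and $B^q$ matches their eigenvalue multisets and yields a permutation $\sigma$ of $\Lambda$ with $\sigma(\lambda)^q=\lambda^p$. Iterating along a cycle of length $N$ gives $\lambda^{q^N-p^N}=1$; since $|p|\neq|q|$ this forces every $\lambda$ to be a root of unity, and so $B$ has finite order $M$. The key arithmetic observation is that no prime $r$ divides both $M$ and $p$: otherwise $r$ would divide $q^N-p^N$ and $p^N$, hence $q$, contradicting $\gcd(p,q)=1$. Thus $\gcd(p,M)=\gcd(q,M)=1$, and $\mu\mapsto\mu^q$ is a bijection on the group of $M$-th roots of unity. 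For each $\lambda\in\Lambda$ the equation $\mu^q=\lambda^p$ then has a \emph{unique} solution $\mu\in\Lambda$, upgrading the general containment $AV_\lambda\subset\bigoplus_{\mu^q=\lambda^p}V_\mu$ to the equality $AV_\lambda=V_{\sigma(\lambda)}$ between individual eigenspaces.

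Finally, irreducibility makes $\sigma$ transitive on $\Lambda$. Let $H=\Stab_\Gamma V_{\lambda_0}$, of index $|\Lambda|$ in $\Gamma$. Every $A^kBA^{-k}$ preserves $V_{\lambda_0}$, so the normal closure $N$ of $b$ in $\Gamma$ lies in $H$, and $H/N$ is cyclic as a subgroup of $\Gamma/N\cong\bb{Z}$. Each $A^kBA^{-k}$ acts on $V_{\lambda_0}$ as a scalar, so the image of $\rho(H)$ on $V_{\lambda_0}$ is abelian. Since $V\cong\mathrm{Ind}_H^\Gamma V_{\lambda_0}$ is irreducible, so is $V_{\lambda_0}$ as an $H$-module, which forces $\dim V_{\lambda_0}=1$. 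Hence $\rho(\Gamma)$ sits in the monomial group relative to $\{V_\lambda\}$, with cyclic permutation image and abelian diagonal kernel, so $\rho(\Gamma)$ is metabelian. The principal obstacle is upgrading the containment $AV_\lambda\subset\bigoplus V_\mu$ into a genuine permutation of eigenspaces, which is precisely what the arithmetic $\gcd(q,M)=1$ provides.
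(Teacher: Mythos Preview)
Your argument is correct, but it proceeds along quite different lines than the paper's. The paper never chooses a linear embedding or reduces to the irreducible case; instead it works directly with Zariski closures. Setting $L=\overline{\rho(\Gamma)}$ and $H=\overline{\rho(B)}$, one observes that $H_1=\overline{\langle\rho(b^p)\rangle}$ and $H_2=\overline{\langle\rho(b^q)\rangle}$ share the identity component $H^\circ$, so $\rho(a)$ (which conjugates $H_1$ to $H_2$) normalizes $H^\circ$. In the finite group $H/H^\circ$ the images of $H_1,H_2$ are conjugate, hence have equal index in $H$; but these indices divide $p$ and $q$ respectively, so coprimality forces $H_1=H_2=H$, whence $H\trianglelefteq L$ and $L$ is abelian-by-abelian. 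This is shorter and yields metabelianity of $\rho(\Gamma)$ in one stroke, uniformly over all $\rho$, with no induction on dimension. Your approach, by contrast, is a concrete linear-algebra analysis: you show in the irreducible case that $\rho(b)$ is semisimple of finite order with eigenvalues roots of unity of bounded order, and that $\rho(a)$ genuinely permutes the one-dimensional eigenlines. The payoff is that you recover, essentially for free, the structural information about $\rho(b)$ (finite order, eigenvalue bound) that the paper obtains separately in the next lemma by invoking McLaury's calculation. One small expository point: when you first introduce the permutation $\sigma$ to deduce $\lambda^{q^N-p^N}=1$, it is cleanest to take $\sigma$ on the eigenvalue \emph{multiset} (indexed by $\{1,\dots,n\}$); only after establishing $\gcd(q,M)=1$ does $\sigma$ descend to a genuine permutation of the set $\Lambda$ of distinct eigenvalues.
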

\begin{proof}
	Let $L = \overline{\rho(\Gamma)}$ be the Zariski closure of the
image of $\Gamma$, and let $H = \overline{\rho(B)}$ be the Zariski closure
of $\rho(B)$.  The subgroups $H_1 = \overline{\left<\rho(b^p)\right>}$
and $H_2 = \overline{\left<\rho(b^q)\right>}$ are of finite index in $H$
(the indices divide $p,q$ respectively), and in particular we have
an identity $H^\circ = H_1^\circ = H_2^\circ$ of (algebraic) connected
components.  Since $\rho(a)$ conjugates $\rho(b^p)$ to $\rho(b^q)$
it also conjugates $H_1$ to $H_2$ and therefore normalizes $H^\circ$.
$H^\circ$ is also normalized by $\rho(b)\in H$ and we conclude that $H^\circ$
is normalized by $\rho(\Gamma)$ and is hence normal in $L$.

	The two finite subgroups $H^1/H^\circ, H^2/H^\circ < L/H^\circ$ are
conjguate by the image of $\rho(a)$ in $L/H^\circ$.   In particular,
they have the same order.  Now these two subgroups are contained in the
finite group $H/H^\circ$, so they also have the same index there -- in 
other words $[H:H_1] = [H:H_2]$.  However, as noted above these indices
divide $p,q$ respectively.  Since $\mathrm{gcd}(p,q) = 1$ we conclude
that $H = H_1 = H_2$ and thence that $H$ itself is normalized by $\rho(a)$.
It also contains $\rho(b)$ and it now follows that $H$ is normal in $L$.

	Finally, since $\rho(b)\in H$ the image of $\rho(a)$ in $L/H$
is a (Zariski-) topological generator.  Having shown that both $H$ and $L/H$
are abelian we conclude that $L$ and its subgroup $\rho(\Gamma)$ are
solvable.
\end{proof}

\begin{lem}\label{uniform bound}
	If $\rho\in\HKN$, then $\rho(\G)$ is virtually abelian and
consists of semisimple elements.  Furthermore, the abelian group $\rho(B)$
is normal in $\rho(\G)$, finite, and its order is bounded by a constant
$O=O(\G,G)$ that is independent of $\rho$.
\end{lem}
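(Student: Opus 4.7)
The plan is to combine Kempf--Ness theory with an eigenvalue analysis of the relation $ab^p a^{-1} = b^q$.

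First, since $\rho \in \HKN$ lies in the Kempf--Ness set, its $G$-orbit is closed, so $\rho$ is completely reducible. A standard consequence, using any linear embedding $G \hookrightarrow \sln$, is that $\rho(\gamma)$ is semisimple for every $\gamma \in \G$. Combined with Lemma \ref{lem:rhoGamma-solvable}, the Zariski closure $L := \overline{\rho(\G)}$ is a solvable linear algebraic group whose elements are all semisimple. Hence the connected component $L^\circ$ is a connected solvable group of semisimple elements, which forces $L^\circ$ to be a torus (in particular abelian); thus $L$, and therefore $\rho(\G)$, is virtually abelian.

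Next I would turn to the size of $\rho(B) = \langle \rho(b) \rangle$. The key observation is that $\rho(b^p)$ and $\rho(b^q)$ are conjugate in $G$ by $\rho(a)$, and both are semisimple. Viewing them in $\sln$ via the embedding, their eigenvalues must coincide as multisets: if $\lambda_1,\dots,\lambda_n$ are the eigenvalues of $\rho(b)$, then there is a permutation $\sigma\in S_n$ with $\lambda_i^{\,p} = \lambda_{\sigma(i)}^{\,q}$ for all $i$. An easy induction gives
\[
\lambda_i^{\,p^k} = \lambda_{\sigma^k(i)}^{\,q^k} \qquad (k\geq 0),
\]
and taking $k=m$ equal to the order of $\sigma$ (so $m \mid n!$) one obtains $\lambda_i^{\,p^m - q^m} = 1$ for every $i$. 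The assumption $|p| \neq |q|$ guarantees that $p^m \neq q^m$, so each $\lambda_i$ is a root of unity of order dividing $p^m - q^m$. Since $\rho(b)$ is semisimple, this forces $\rho(b)$ to have finite order dividing $p^{n!} - q^{n!}$, uniformly in $\rho$. Setting $O := |p^{n!} - q^{n!}|$ (a quantity depending only on $\G$ and on the dimension $n$ of the embedding of $G$) gives $|\rho(B)| \leq O$.

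Finally, to see that $\rho(B)$ is normal in $\rho(\G)$, recall from the proof of Lemma \ref{lem:rhoGamma-solvable} that $H := \overline{\rho(B)}$ is normal in $L$. Since $\rho(B)$ is now known to be finite, it is already Zariski closed, so $\rho(B) = H$ is normal in $L$ and a fortiori in $\rho(\G) \subset L$. The main obstacle is the eigenvalue/permutation argument yielding the uniform bound in Step~2, which is exactly where the coprime and distinct-absolute-values hypotheses combine: coprimality was already used in Lemma \ref{lem:rhoGamma-solvable} to force $H = H_1 = H_2$, and the condition $|p|\neq|q|$ is what keeps $p^m - q^m$ from vanishing for any $m$.
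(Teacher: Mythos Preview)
Your claim that complete reducibility alone forces every $\rho(\gamma)$ to be semisimple is false as stated: the tautological representation of $\SL_2(\ZZ)$ on $\CC^2$ is irreducible, yet $\left(\begin{smallmatrix}1&1\\0&1\end{smallmatrix}\right)$ is unipotent.  What is actually needed is Richardson's theorem, which the paper invokes explicitly: closedness of the $G$-orbit forces the Zariski closure $L=\overline{\rho(\G)}$ to be \emph{reductive}.  Combined with the solvability coming from Lemma~\ref{lem:rhoGamma-solvable}, $L^\circ$ is then a connected solvable reductive group, hence a torus, and semisimplicity of every element of $L$ follows via the Jordan decomposition.  (Equivalently: once $L$ is reductive its action on $\CC^n$ is completely reducible, and Lie--Kolchin forces each irreducible summand of the solvable group $L$ to be one-dimensional, so $L$ is simultaneously diagonalisable.)  Either way you must pass through reductivity of $L$; the sentence ``a standard consequence \dots\ is that $\rho(\gamma)$ is semisimple'' is not correct on its own.

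With that step repaired, your route to the uniform bound is correct and in fact more direct than the paper's.  The paper first uses finiteness of $N_G(H)/Z_G(H)$ for the diagonalisable group $H=\overline{\rho(B)}$ to find $r$ with $\rho(a^r)$ centralising $\rho(b)$, deduces finiteness of $\rho(B)$ from $\rho(b)^{p^r}=\rho(b)^{q^r}$, and then appeals to a computation of McLaury to bound the eigenvalue orders by $\prod_{k=1}^n\abs{p^k-q^k}$.  Your permutation argument on the multiset identity $\{\lambda_i^{\,p}\}=\{\lambda_i^{\,q}\}$ bypasses both steps, delivering finiteness and the uniform bound $\abs{p^{n!}-q^{n!}}$ simultaneously; it is more elementary and self-contained, at the price of a cruder constant.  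The normality argument at the end agrees with the paper's.
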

\begin{proof}
	Recall that, since $\rho$ lies in the Kempf--Ness set, its
orbit under conjugation must be closed.  A result of
Richardson \cite[Theorem 3.6]{richardson1988conjugacy} now implies that the
Zariski closure $L$ of the image of $\rho$ is reductive.  It follows that
$L^\circ$ is a connected reductive solvable group, in other words an algebraic
torus (possibly zero-dimensional).  This makes $L$ and its subgroup $\rho(\G)$
virtually abelian and ensures that every element of $L^\circ$ is semisimple.
Since every element of $L$ has a positive power in $L^\circ$, the
Jordan decomposition shows that every element of $L$ (in particular, 
every element of $\rho(\Gamma)$) is semisimple as well.

	Now $H =\overline{\rho(B)}$ is a commutative subgroup consisting
of semisimple elements.  By \cite[Cor.\ 3.2.9]{Springer:LinearAlgGps}
the group $N_G(H)/Z_G(H)$ is finite, and it follows that conjguation
by $\rho(a)$ is an automorphism of $H$ of finite order.  In particular,
there is $r$ such that $\rho(a^r)$ commutes with $H$, in particular with
$\rho(b)$.  On the other hand,
$a b^p a^{-1} = b^q$ implies $a b^k a^{-1} = b^{qk/p}$
the group $H/Z_G(L)$ is finite, and it follows that conjugation
by $\rho(a)$ is an automorphism of $L$ of finite order.  In particular,
there is an $r$ such that $\rho(a^r)$ commutes with $L$, hence with $\rho(b)$.
On the other hand, $a b^p a^{-1} = b^q$ implies $a b^k a^{-1} = b^{qk/p}$
whenever $k$ is divisible by $p$.  Conjugating $b^{p^r}$ by $a$ $r$ times
we obtain the identity $a^r b^{p^r} a^{-r} = b^{q^r}$, and upon applying
$\rho$ that $\rho(b)^{p^r} = \rho(b)^{q^r}$.  Since $p$ and $q$ are distinct,
we conclude that $\rho(b)$ has finite order (dividing $\abs{p^r-q^r}$)
and that $\rho(B)$ is finite.

	Since $\rho(B)$ is finite, is is also Zariski closed and
$H = \rho(B)$.  It follows that $\rho(B)$ is normalized by $L$, hence by
$\rho(\Gamma)$.  As before, $\rho(\Gamma)/\rho(B)$ is cyclic, generated
by the image of $a$.

	We now rely on our linear embedding $G\subset\sln$.  Since $L$
is reductive, this linear representation of $L$ decomposes as a direct
sum of irreducible representations.  Since $\rho(\Gamma)$ is Zariski-dense
in $L$, those subspaces are also irreducible as representations of $\Gamma$.
The calculations of \cite[Thm.\ 5.1]{mclaury2012irreducible} now show
that every eigenvalue of $b$ in this representation is a root of unity of
order dividing $\abs{p^k-q^k}$ for some $1\leq k\leq n$, hence dividing
$O = \prod_{k=1}^{n}\abs{p^k-q^k}$.  Since $\rho(b)$ is semisimple it follows
that $\rho(b)$ has order dividing $O$, and in particular
that $\abs{\rho(B)}\leq O$.
	\end{proof}

\begin{proof}[Proof of Proposition \ref{bundle}]
Before we study $\cal{F}$, we first consider the slightly larger set
$$
\ctF:=\left\{H<G:H\text{ is abelian of order at most }O\right\}.
$$
Observe that $G^{\circ}$ (the identity component of $G$) acts by conjugation on $\tilde{\cal{F}}$ with closed stabilizers.  As such, we can endow
$\tilde{\mathcal{F}}$ with the orbifold structure with respect to which each
$G^{\circ}$-orbit is a connected homogeneous $G^{\circ}$-manifold.
Concretely, if we define the ``connected normalizer'' as
$N_{G^{\circ}}(H):=N_{G}(H)\cap G^{\circ},$
then the connected component of $H\in\tilde{\mathcal{F}}$ will be its
$G^{\circ}$ orbit, topologized via the identification with
$G^{\circ}/N_{G^{\circ}}(H)$.   There being only finitely many $G$-orbits in
$\ctF$, we nessarily give it the topology of the disjoint union of the orbits.
 
A homomorphism $\rho\colon B\raw G$ need not extend to the full group
$\Gamma$ so the map 
$$\pi:\HKN\raw \ctF,\quad\pi(\rho)=\rho(B)$$
need not be surjective.
Recall, however, that we have defined $\mathcal{F}=\pi\left(\HKN\right)$
and observe that the $G^{\circ}$-equivariance of $\pi$ implies that
$\cal{F}$ is the union of those components of $\ctF$ it intersects,
giving it the desired manifold structure.

For the bundle structure, let $\mathcal{Z}\subset \mathcal{F}$ denote the
connected component of a finite abelian subgroup $H\in \mathcal{F}$, let
$\mathcal{H}:=\pi\inv(\mathcal{Z})\subset \HKN$ and let
$\mathcal{H}_H:=\pi\inv(H)$. We can then identify $\cal{H}$ and the
restriction $\pi$ there with the twisted product 
$$(G^{\circ}\times\mathcal{H}_H)/N_{G^{\circ}}(H)\raw G^{\circ}/N_{G^{\circ}}(H)$$
 where $N_{G^{\circ}}(H)$ acts on $G^{\circ}$ (resp. $\mathcal{H}_H$) by right multiplication (resp. conjugation). This shows that $\pi$ is a locally trivial fibre bundle.	
\end{proof}

We will also need to consider the analogous manifold of abelian subgroups in the maximal compact subgroup $K\subset G$.  To this end, let
$$
\ctFK:=\{H\in\tilde{\cal{F}}\,|\,H\subset K\}
$$
be the manifold of abelian subgroups of $K$ of order at most $O$. It follows, as before, that the conjugation action of $K$ on $\tilde{\cal{F}}_K$ has finitely many orbits and we can endow  $\tilde{\cal{F}}_K$ with the structure of a homogeneous manifold. Concretely, the connected component of $H\in\tilde{\cal{F}}$ is identified with $K^\circ/N_{K^\circ}(H)$. We record the following two facts about manifolds of finite abelian subgroups:
\begin{prop}\label{manifold of subgroups retraction}
	The manifold $\tilde{\mathcal{F}}_K$ is a deformation retract of $\tilde{\mathcal{F}}$.
\end{prop}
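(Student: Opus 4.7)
The plan is to use the non-positive curvature of the symmetric space $\cal{S}(G)$ to construct an explicit strong deformation retraction. For each $H\in\ctF$, Proposition \ref{symmetric space}(2) identifies the fixed-point set $\cal{S}(G)^H$ (under the conjugation action) with the collection of maximal compact subgroups of $G$ containing $H$; this set is nonempty (any compact subgroup lies in some maximal compact) and, being the fixed-point set of a group of isometries of a CAT(0) space, is closed and convex. Fix a base point $[K]\in\cal{S}(G)$ and let $K(H)\in\cal{S}(G)^H$ be the unique nearest point to $[K]$.

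To convert $K(H)$ into a conjugating element I invoke the Cartan decomposition $G^\circ=K^\circ\cdot\exp(i\fk{k})$, where $\fk{k}=\mathrm{Lie}(K^\circ)$: any point of $\cal{S}(G)$ can be written continuously as $\exp(X)\,K\,\exp(-X)$ for some $X\in i\fk{k}$, via inversion of the symmetric-space exponential at $[K]$ (a diffeomorphism $T_{[K]}\cal{S}(G)\to\cal{S}(G)$ by the Cartan--Hadamard theorem) composed with the identification of $T_{[K]}\cal{S}(G)$ with a subspace of $i\fk{k}$. Let $X(H)\in i\fk{k}$ be such a lift of $K(H)$; since $H\subset K(H)$, one has $\exp(-X(H))\,H\,\exp(X(H))\subset K$. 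Define
\[
r_t:\ctF\raw\ctF,\quad r_t(H):=\exp(-tX(H))\,H\,\exp(tX(H)).
\]
Conjugation preserves orders and abelianness, so $r_t(H)\in\ctF$; by construction $r_0=\Id$ and $r_1(\ctF)\subset\ctFK$; and if $H\subset K$ already, then $[K]\in\cal{S}(G)^H$ forces $K(H)=[K]$ and $X(H)=0$, so $r_t(H)=H$ for all $t$. This is the desired strong deformation retract.

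The main technical point is continuity of $H\mapsto K(H)$ (and hence of $X(H)$). By Proposition \ref{bundle}, each connected component of $\ctF$ is a $G^\circ$-orbit $G^\circ/N_{G^\circ}(H_0)$, and for $H=gH_0g\inv$ one has $\cal{S}(G)^H=g\cdot\cal{S}(G)^{H_0}$. Since $G^\circ$ acts on $\cal{S}(G)$ by isometries and the nearest-point projection onto a closed convex subset of a CAT(0) space is $1$-Lipschitz,
\[
K(H)=g\cdot\mathrm{proj}_{\cal{S}(G)^{H_0}}(g\inv\cdot[K])
\]
is continuous in $g$; local continuous sections of $G^\circ\to G^\circ/N_{G^\circ}(H_0)$ then yield local continuity of $K(H)$ in $H$, and the uniqueness of the nearest-point projection removes dependence on the choice of section, giving global continuity on each component and hence on $\ctF$.
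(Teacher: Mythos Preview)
Your argument is correct, but it takes a genuinely different route from the paper's.  The paper proves the proposition by an abstract homotopy-theoretic argument: it first checks that the inclusion $\ctFK\hookrightarrow\ctF$ is a bijection on connected components, and then, fixing $H\in\ctFK$ and identifying a component pair with $K^\circ/N_{K^\circ}(H)\hookrightarrow G^\circ/N_{G^\circ}(H)$, it places these in a commutative diagram of principal bundles
\[
\begin{tikzcd}
N_{K^\circ}(H) \arrow[r] \arrow[d] & K^\circ \arrow[r] \arrow[d] & \cal{Z}_K \arrow[d] \\
N_{G^\circ}(H) \arrow[r] & G^\circ \arrow[r] & \cal{Z}
\end{tikzcd}
\]
and invokes Lemma~\ref{normalizer homotopy} to conclude that the first two vertical maps, hence the third, are homotopy equivalences.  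This gives a deformation retract only after appealing to general CW/cofibration theory.

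Your approach is instead geometric and explicit: you use the CAT(0) structure of $\cal{S}(G)$ to choose, continuously in $H$, a maximal compact $K(H)\supset H$ (the nearest point in $\cal{S}(G)^H$ to the basepoint $[K]$), lift $K(H)$ to an element $\exp(X(H))$ via the Cartan decomposition, and then conjugate along the geodesic $t\mapsto\exp(-tX(H))$.  This is essentially the same construction the paper carries out separately in Proposition~\ref{compact choice} (``Cartan's Lemma''), where a continuous $\kappa\colon\ctF\to\cal{S}(G)$ with $H\subset\kappa(H)$ is built by minimizing a sum of squared distances; you have recognized that this map already does the work of the retraction.  Your version is more self-contained and produces the strong deformation retraction directly, while the paper's version is more modular (the map $\kappa$ is reused later, in Section~3.4) but leaves the passage from ``homotopy equivalence'' to ``deformation retract'' implicit.
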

\begin{proof}
	The argument is analogous to the one in \cite[Proposition 4.1]{pettet2013commuting}.
We first verify that $\ctFK\embed \ctF$ induces a bijection at the level
of connected components. To see this, recall that any $H\in\cal{F}$, being
compact, can be conjugated into $K$ by an element of $G^\circ$, which shows
that $\pi_0(\ctFK)\raw \pi_0(\ctF)$ is surjective.  Conversely,
\cite[Lemma 3.4]{pettet2013commuting} asserts that two subgroups of $K$
which are conjugate by an element $G^\circ$ are also conjugate by an element
of $K^\circ$, in other words that our map of connected components is injective.
	
Let $\cal{Z}$ denote a component of $\tilde{\cal{F}}$ and let $\cal{Z}_K$ denote the unique component of  $\tilde{\cal{F}}_K$ contained in $\cal{Z}$. To complete the proof, it suffices to show that $\cal{Z}_K\hookrightarrow \cal{Z}$ is a homotopy equivalence. Fix a subgroup $H\in \cal{Z}_K$ and use it to identify $\cal{Z}_K$ and $\cal{Z}$ with $K^\circ/N_{K^\circ}(H)$ and $G^\circ/N_{G^\circ}(H)$ respectively. We now have a commutative diagram
$$\begin{tikzcd}
  N_{K^\circ}(H) \arrow[r,] \arrow[d,]
    & K^\circ \arrow[d,] \arrow[r,]  & \cal{Z}_K \arrow[d,] \\
  N_{G^\circ}(H) \arrow[r,] & G^\circ \arrow[r,]
& \cal{Z} \end{tikzcd}$$
where the two rows are principal bundles. Since the first two columns are
homotopy equivalences (that was Lemma \ref{normalizer homotopy}),
it follows that the last column is also a homotopy equivalence. 
	\end{proof}

\begin{prop}[Cartan's Lemma]\label{compact choice}
	There is a continuous map $\kappa\colon\ctF\raw \mathcal{S}(G)$ with:
	\begin{enumerate}
		\item $H\subset \kappa(H)$ for all $H\in\ctF$, and
		\item $\kappa(H)=K$ for all $H\in\ctFK$.
	\end{enumerate}
\end{prop}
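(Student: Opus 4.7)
The strategy is to exploit the fact, recorded in Proposition \ref{symmetric space}, that $\mathcal{S}(G)$ is a non-positively curved Riemannian symmetric space, and in particular a complete CAT(0) space. I would then define $\kappa(H)$ as the \emph{circumcenter} of the finite orbit of the base point $K$ under the conjugation action of $H$. Concretely, for each $H\in\ctF$ let $\mathcal{O}(H):=\{hKh\inv:h\in H\}\subset\mathcal{S}(G)$, a finite (hence bounded) subset, and set $\kappa(H)$ to be the unique point of $\mathcal{S}(G)$ minimizing the functional $x\mapsto\max_{P\in\mathcal{O}(H)}d(x,P)$. Existence and uniqueness of such a circumcenter follow from the strict convexity of the distance function in a complete CAT(0) space.

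I would then verify the two listed properties. For the containment $H\subset\kappa(H)$: the group $H$ acts isometrically on $\mathcal{S}(G)$ and permutes the set $\mathcal{O}(H)$, so by uniqueness of the circumcenter it fixes $\kappa(H)$, i.e., $H\subset N_G(\kappa(H))$. Since $H$ is compact, Proposition \ref{symmetric space}(2) upgrades this to $H\subset\kappa(H)$. For the normalization $\kappa(H)=K$ when $H\in\ctFK$: then $H\subset K$ normalizes $K$, so $\mathcal{O}(H)=\{K\}$ and its circumcenter is $K$ itself.

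The only delicate point is continuity of $\kappa$. On each $G^\circ$-orbit in $\ctF$, a neighborhood of a basepoint $H$ is parameterized by $gHg\inv$ for $g$ ranging over a slice to $N_{G^\circ}(H)$ in $G^\circ$, and
$$\mathcal{O}(gHg\inv)=\left\{(ghg\inv)K(ghg\inv)\inv:h\in H\right\}$$
then depends continuously on $g$ as a finite subset of $\mathcal{S}(G)$ in the Hausdorff topology. Continuity of the circumcenter with respect to Hausdorff distance on bounded subsets is a classical consequence of strict convexity in CAT(0) geometry. Combining these observations gives continuity on each connected component of $\ctF$, and since $\ctF$ has only finitely many components, $\kappa$ is globally continuous.

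I expect the only real obstacle to be the clean packaging of the CAT(0) circumcenter construction and its continuous dependence on the defining set; the rest is bookkeeping with Proposition \ref{symmetric space} and the orbit description of the topology on $\ctF$.
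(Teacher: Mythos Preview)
Your approach is essentially the same as the paper's: both define $\kappa(H)$ as the unique minimizer of a strictly convex $H$-invariant functional on the CAT(0) space $\mathcal{S}(G)$ built from the orbit $\{hKh^{-1}:h\in H\}$, and both deduce properties (1) and (2) exactly as you do. The only difference is the choice of functional---the paper uses the sum of squared distances $f_H(\kappa)=\sum_{g\in H}d(\kappa,gKg^{-1})^2$ rather than the maximum---which makes the continuity step marginally more transparent (a continuously varying family of strictly convex proper functions has continuously varying minima), whereas Hausdorff-continuity of the circumcenter, while true in CAT(0) spaces, requires a short extra argument.
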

\begin{proof}
	Let $d(\cdot,\cdot)$ denote the Riemannian metric on the simply
connected non-positively curved complete metric space $\cal{S}(G)$.
The strict convexity of $d(\cdot,\cdot)^2$ implies that the function
	$$
	f_H\colon\cal{S}(G)\raw\bb{R}_+,\quad f_H(\kappa)=\sum_{g\in H}d(\kappa,gKg\inv)^2
	$$
	is strictly convex. Since it is also proper, the function $f_H$ attains its minimum at a unique point $\kappa = \kappa(H)\in\cal{S}(G)$. It follows from the $H$-invariance of $f_H$ that this minimum is also $H$-invariant and,
hence, $H$ is contained in the maximal compact subgroup stabilizing $\kappa(H)$. Notice also that if $H\subset K$, then the non-negative function $f_H$ vanishes at $K$, showing that $\kappa(H)=K$. Finally, it follows from the non-positive
curvature that the minimum of $f_H$ depends continuously on $H$, that is, that
	$$
	\kappa:\tilde{\cal{F}}\raw\cal{S}(G),\quad H\mapsto \kappa(H)
	$$
	is continuous.
\end{proof}

\subsection{The Covering Map}Before going further, we need another bundle. 
For any finite abelian subgroup $H\subset G$ let $$G_H:=N_G(H)/H$$ and let us denote the corresponding natural projection by  $$p:N_G(H)\raw G_H.$$ Notice, as in the proof of Corollary \ref{maximal compact normalizer}, that $N_G(H)$ and thus $G_H$ are reductive.

Recall from the proof of Proposition \ref{bundle} that if $\rho\in\pi\inv(H)$ then $\rho(B)=H$ and $\rho(\Gamma)\subset N_G(H)$. Thus, given any such $\rho$ we obtain a representation  
$$
p\circ \rho: \Gamma\raw G_H
$$ with image landing in the subset $(G_H)_s$ of semisimple elements of $G_H$.
  By construction, the subgroup $B$ of $\Gamma$ is contained in the kernel of $p\circ \rho$ so this map is uniquely determined by the image of the element $a\in \Gamma$. Altogether, we have a map
 $$
 p_\ast: \pi\inv(H)\raw (G_H)_s
 $$
 with finite fibers because the projection $N_G(H)\raw G_H$ has finite kernel. In particular, $p_\ast(\pi\inv(H))$ is contained in the set $(G_H)_{s}$ of semisimple elements of $G_H$.

  The action by conjugation of $N_{G^\circ}(H)$ on $N_G(H)$ induces an action
$$
(G_H)_s \curvearrowleft N_{G^\circ}(H),\quad (p(g),h)\mapsto p(hgh\inv).
$$
This  can now be used to construct the twisted product 
 $$
 \mathcal{G}:=(G^\circ\times ({G}_H)_{s})/N_{G^\circ}(H)\xrightarrow{\pi_\cal{G}} G^\circ/N_{G^\circ}(H)=\mathcal{Z}.
 $$
Here, as in the previous section, $\mathcal{Z}$ denotes the component of $\mathcal{F}$ containing $H$.
The map $p_\ast$ is  $N_{G^\circ}(H)$-equivariant and, in particular, it induces a morphism of fiber bundles from $\mathcal{H}$ to $\mathcal{G}$ where $\mathcal{H}=\pi\inv(\mathcal{Z})$ is a component of the bundle defined in Proposition \ref{bundle}. In fact, can say a bit more: 

\begin{prop}\label{covering map}
	The bundle map $p_\ast:\mathcal{H}\raw \mathcal {G}$ is a finite cover onto a union of connected components of $\mathcal{G}$.
\end{prop}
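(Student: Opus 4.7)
The plan is to reduce the proposition to a fibre-wise statement via the bundle structure, then analyse the fibre map using the discreteness of the finite group $H$ and the covering structure of $p\colon N_G(H)\to G_H$. Specifically, since $p_\ast$ is a morphism of fibre bundles over $\cal{Z}$ --- both $\cal{H}=(G^\circ\times\pi^{-1}(H))/N_{G^\circ}(H)$ and $\cal{G}=(G^\circ\times (G_H)_s)/N_{G^\circ}(H)$ being associated bundles --- the properties of being a finite covering and of having image a union of connected components can be verified locally on the base $\cal{Z}$. It therefore suffices to show that the fibre map
$$
p_\ast\colon \pi^{-1}(H)\longrightarrow (G_H)_s
$$
is a finite covering onto an $N_{G^\circ}(H)$-invariant union of connected components of $(G_H)_s$.

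I would then exploit three structural features. First, since $\rho(b)$ must generate the finite group $H$, the continuous evaluation $\rho\mapsto\rho(b)$ takes values in the finite discrete set $H^\times$ of generators of $H$; hence $\pi^{-1}(H)$ decomposes into clopen pieces $X_\beta:=\{\rho\in\pi^{-1}(H):\rho(b)=\beta\}$ indexed by $\beta\in H^\times$. Second, on each $X_\beta$ the embedding $\rho\mapsto\rho(a)$ identifies $X_\beta$ with a subset of
$$
Y_\beta:=\{\alpha\in N_G(H):\alpha\beta^p\alpha^{-1}=\beta^q\},
$$
which is itself a union of connected components of $N_G(H)$ since $\Aut(H)$ is discrete and the map $\alpha\mapsto c_\alpha\in\Aut(H)$ is locally constant. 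Third, $p\colon N_G(H)\to G_H$ is a finite covering, with deck transformations coming from the finite normal subgroup $H$. Combining these, $p_\ast|_{X_\beta}$ factors as $X_\beta\hookrightarrow Y_\beta\xrightarrow{p}G_H$; it is therefore a local homeomorphism (since $\rho(b)$ is locally constant and $p$ is a covering map) with finite fibres, bounded by $|H|^2\leq O^2$ (at most $|H|$ lifts of a given $g$ in $p^{-1}(g)$, and at most $|H|$ compatible generators $\beta$). Closedness of the image in $(G_H)_s$ follows via a properness argument: a sequence $g_n=p(\rho_n(a))\to g$ in $(G_H)_s$ lifts by properness of the finite cover $p$ to a convergent subsequence $\rho_n(a)\to\alpha\in N_G(H)$; after passing to a further subsequence on which $\rho_n(b)=\beta$ is constant, continuity yields $\alpha\beta^p\alpha^{-1}=\beta^q$, and the pair $(\alpha,\beta)$ lies in $\HKN$ because the Kempf-Ness set is closed.

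The step I expect to require the most care is verifying that $X_\beta$ is in fact a union of connected components of $Y_\beta$, so that the image of $p_\ast|_{X_\beta}$ genuinely forms a union of components of $(G_H)_s$ rather than a proper closed subset carved out by the Kempf-Ness condition. The key inputs here are Lemma \ref{uniform bound} (every element of $\rho(\Gamma)$ is semisimple for $\rho\in\HKN$) and Richardson's criterion (an orbit is closed iff its Zariski closure is reductive), which together should identify the image of $p_\ast|_{X_\beta}$ with the semisimple locus inside the component $p(Y_\beta)\subset G_H$, a set that is manifestly $N_{G^\circ}(H)$-invariant. Assembling the contributions over $\beta\in H^\times$ and spreading along the bundle over $\cal{Z}$ then yields the proposition.
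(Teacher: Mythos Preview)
Your fibre-wise reduction and the decomposition by the locally constant value $\rho(b)=\beta$ are sound, and this is a genuinely different route from the paper's. After the same reduction to a single fibre, the paper argues instead that $p_\ast\colon\pi^{-1}(H)\to(G_H)_s$ is (i)~locally injective and (ii)~has the unique path-lifting property, the latter by lifting paths through the finite cover $N_G(H)\times N_G(H)\to G_H\times G_H$ and checking that the lift consists of homomorphisms $\rho_t$ with $\rho_t(B)=H$ and semisimple image; it then invokes the semi-analytic structure to conclude that a locally injective map with path-lifting is a covering onto a union of components. Your approach trades the path-lifting/semi-analytic machinery for an explicit description of the fibres (a choice of generator $\beta\in H^\times$ times a choice of $p$-preimage), at the cost of having to identify $X_\beta$ inside $Y_\beta$ directly.

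On the delicate step you flag: your proposed resolution via Richardson's criterion does not close it. Richardson tells you that if $\alpha\in Y_\beta$ is semisimple then the associated representation has \emph{closed $G$-orbit}; it does not tell you that this representation is a \emph{minimal-norm vector} in that orbit, which is what membership in $\HKN$ requires. Conjugating such a $\rho$ by an element of $Z_G(H)\setminus K$ preserves both $\rho(B)=H$ and semisimplicity of the image but will typically move $\rho$ off the Kempf--Ness set, so $X_\beta$ is in general a proper subset of the semisimple locus of $Y_\beta$, and your local-homeomorphism claim does not follow. It is worth noting that the paper's own path-lifting argument has the same lacuna: it verifies that the lifted $\rho_t$ is a homomorphism into $N_G(H)_s$ with $\rho_t(B)=H$, but never checks that $\rho_t\in\HKN$, and indeed asserts without justification that $\pi^{-1}(H)$ ``consists of homomorphisms $\rho\colon\Gamma\to N_G(H)_s$ with $\rho(B)=H$''. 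So you have correctly located the one genuine difficulty; neither your sketch nor the paper's proof fully resolves it as written.
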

\begin{proof}
Since $p_\ast$ is a morphism of fiber bundles, it suffices to show that the map is a covering onto a union of connected components at the level of fibers. In other words, for every $H\in\cal{Z}$, we need to verify our claim for the map 
$$
p_\ast:\pi\inv(H)\raw\pi_\cal{G}\inv(H)
$$
where $\pi\inv(H)$ consists of homomorphisms $\rho:\Gamma\raw N_G(H)_s$ with $\rho(B)=H$ and $\pi_\cal{G}\inv(H)=(G_H)_s=(N_G(H)/H)_s$. Since all the objects in the game are semi-analytic sets, it suffices to show that $p_\ast$ is locally injective and has the path-lifting property.

We start by showing that $p_\ast$ is locally injective. Suppose that $\rho$ and $\psi$ are representations in $\pi\inv(H)$ that are very close to each other.  Recalling that we identify representations with the image of our generating set for $\Gamma=\langle a,b \rangle$, this means that the pair of elements $\rho(a)$ and $\psi(a)$ (respectively $\rho(b)$ and $\psi(b)$) differ by an element $g_a$ (respectively $g_b$) of $N_G(H)$ very close to the identity. 
Now, if  $p_\ast(\rho)=p_\ast(\psi)$ then $g_a$ and $g_b$ are elements of $H$. Moreover, since $H$ is discrete and $g_a$ and $g_b$ are very close to the identity, it follows that they must both be the identity. This shows that $\rho=\psi$ and thus $p_\ast$ is locally injective.

We show that $p_\ast$ has the path-lifting property. To do this, it is convenient to identify $\pi_\cal{G}\inv(H)$ with the subset $(G_H)_s\times \Id\subset G_H\times G_H$ and to view $\pi\inv(H)$ as a subset of $N_G(H)\times N_G(H)$. 
Let $\rho\in\pi\inv(H)$ and consider a  path 
$$
\gamma:[0,1]\raw (G_H)_s\times \Id\subset G_H\times G_H,\,\,\,\,\gamma(t)=(\gamma_1(t),\gamma_2(t))
$$
with $\gamma(0)=p_\ast(\rho)$.  
This path always admits  a unique lift $\tilde\gamma$ to the finite cover 
$$N_G(H)\times N_G(H)\raw G_H\times G_H$$
with $
\tilde{\gamma}(0)=(\rho(a),\rho(b)).
$
 To complete the proof we need to show that 
$$
\tilde{\gamma}(t)\in\pi\inv(H)\subset N_G(H)\times N_G(H), 
$$
i.e., we need to check that the relations of $\Gamma$ are preserved within the path and that the corresponding homomorphisms map $B$ onto $H$.

Let $w$ be a word in the letters $a$ and $b$ which is trivial in $\Gamma$. Substituting $\gamma_i(t)$ (respectively $\tilde\gamma_i(t)$) for $a$ and $b$ in $w$ yields a continuous path $w(t)$ in $G_H$  (respectively $N_G(H)$) such that $w(t)$ is the image of $\tilde{w}(t)$ under the quotient map. By construction, $w(t)$ is the identity element of $G_H$ for all $t$. 
It follows that $\tilde{w}(t)\in H$ for every $t$ and, since $H$ is discrete, this is a constant map as well. Given that $\tilde{w}(0)$ is trivial (it corresponds to the homomorphism $\rho$), we conclude that $\tilde{w}(t)$ is trivial for all $t$ as well. This shows that $\tilde\gamma(t)$ is a path of homomorphisms $\rho_t:\Gamma\raw N_G(H)_s$.  The image of these homomorphisms is always contained in $N_G(H)_s$  because $\gamma_i(t)$ is semisimple and the quotient map is finite-to-one.

Finally, we show that the homomorphisms $\rho_t$ determined by $\tilde{\gamma}(t)$ send the subgroup $\langle b \rangle = B\subset \Gamma$ onto $H$. This follows, once again, because the homomorphisms to $G_H$ determined by $\gamma(t)$ map $b$ to the identity in $G_H$ and, as above, this implies that $\rho_t(b)$ is a fixed element of $H$. Since $\rho_0(b)$ is a generator of $H$ this must then hold for  $\rho_t(b)$.
\end{proof}

\subsection{Putting the Puzzle Back Together}
  
 Recall that we denote the manifold of maximal compact subgroups of $G$ by $\mathcal{S}(G)$ and, from Proposition \ref{compact choice}, that we have a continuous map $\kappa:{\cal{F}}\raw \cal{S}(G)$. We use this map to define sub-bundles of $\pi:\Hom_{KN}(\Gamma,G)\raw \cal{F}$ and $\pi_{\cal{G}}:\cal{G}\raw\cal{Z}$  which will be intermediate steps in our deformations of $\Hom_{KN}(\Gamma,G)$ onto $\Hom(\Gamma,K)$.
 
 First, consider the set
 $$
 \cal{K}:=\{\rho\in\HKN\,|\, \rho(\Gamma)\subset \kappa(\pi(\rho))\}.
 $$
Observe that, by the continuity of $\kappa$, the restriction of $\pi$ to $\cal{K}$ is a sub-bundle $\pi_\cal{K}:\cal{K}\raw\cal{F}$. Since $\kappa(H)=K$ for every $H\in \cal{F}_K$, it follows that 
$$\Hom(\Gamma,K)=\pi_\cal{K}\inv(\cal{F}_K).$$
 Recalling from Proposition \ref{manifold of subgroups retraction} that $\cal{F}_K$ is a deformation retract of $\cal{F}$, it follows moreover that $\Hom(\Gamma,K)$ is a deformation retract of $\cal{K}$.
 
 Second, observe by Corollary \ref{maximal compact normalizer} that  $N_{\kappa(H)}(H)$ is a maximal compact subgroup of  $N_G(H).$ This ensures that  
 $$
 K_H:=N_{\kappa(H)}(H)/H
 $$
is also a maximal compact subgroup of $G_H$. We can now consider the subset $\cal{C}\subset \cal{G}$ defined fiber-wise by specifying that 
$$
\cal{C}\cap \pi_\cal{G}\inv(H)= K_H
$$
for every $H\in\cal{Z}$. It follows, once again, from the continuity of $\kappa$ that $K_H$ varies continuously so we obtain a sub-bundle 
$$
\pi_\cal{C}:\cal{C}\raw\cal{Z}
$$
of $\pi_\cal{G}:\cal{G}\raw\cal{Z}$. In fact, the following holds:
\begin{lem}\label{relationship}Keeping the notation as above, we have 
	$p_\ast\inv(\cal{C})=\cal{K}\cap\pi\inv(\cal{Z})$.
\end{lem}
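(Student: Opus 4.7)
The plan is to prove the equality by chasing through the definitions fiber-by-fiber, using the basic fact from Proposition \ref{bundle} that $\rho(\Gamma) \subset N_G(H)$ whenever $\rho \in \pi\inv(H)$. Since both $\mathcal{K} \cap \pi\inv(\mathcal{Z})$ and $p_\ast\inv(\mathcal{C})$ lie in $\mathcal{H} = \pi\inv(\mathcal{Z})$, and $\pi = \pi_\mathcal{G}\circ p_\ast$, it suffices to prove the fiberwise equality: for each $H\in\mathcal{Z}$ and each $\rho\in\pi\inv(H)$,
$$\rho(\Gamma)\subset \kappa(H)\iff p\circ\rho(\Gamma)\subset N_{\kappa(H)}(H)/H = K_H.$$

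The crucial identity behind both implications is that the preimage under $p\colon N_G(H)\to G_H$ of the subgroup $K_H = N_{\kappa(H)}(H)/H$ is exactly $N_{\kappa(H)}(H)$. This uses that $H\subset \kappa(H)$ (guaranteed by Proposition \ref{compact choice}), so $H \subset N_{\kappa(H)}(H)$ and hence $p\inv(K_H) = N_{\kappa(H)}(H)\cdot H = N_{\kappa(H)}(H)$.

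For the forward direction, assume $\rho\in\mathcal{K}\cap\pi\inv(H)$. Then $\rho(\Gamma)\subset \kappa(H)$ by definition of $\mathcal{K}$, and $\rho(\Gamma)\subset N_G(H)$ by Proposition \ref{bundle}. Intersecting, $\rho(\Gamma)\subset \kappa(H)\cap N_G(H) = N_{\kappa(H)}(H)$, so $p\circ\rho(\Gamma)\subset K_H$, i.e.\ $p_\ast(\rho)\in \mathcal{C}$. For the reverse direction, assume $\rho\in\pi\inv(H)$ with $p_\ast(\rho)\in \mathcal{C}$, meaning $p\circ\rho(\Gamma)\subset K_H$. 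Applying the preimage identity above gives $\rho(\Gamma)\subset p\inv(K_H) = N_{\kappa(H)}(H)\subset \kappa(H)$, so $\rho\in \mathcal{K}$.

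There is essentially no obstacle here beyond correctly identifying $p\inv(K_H)$; the content is purely a compatibility check between the constructions of $\mathcal{K}$ (cut out by the condition that $\rho$ land in a varying maximal compact $\kappa(H)$) and $\mathcal{C}$ (cut out by the condition that $p\circ\rho$ land in the corresponding maximal compact $K_H$ of the reductive quotient $G_H$). The only subtlety worth flagging explicitly is the verification that $H\subset \kappa(H)$ makes $p\inv(K_H)$ coincide with $N_{\kappa(H)}(H)$ rather than merely containing it up to an $H$-coset.
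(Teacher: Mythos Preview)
Your proof is correct and follows the same approach as the paper's: both arguments reduce to the identity $p\inv(K_H)=N_{\kappa(H)}(H)$ and deduce the result from there. Your version is in fact more complete, since you verify both inclusions explicitly and justify $p\inv(K_H)=N_{\kappa(H)}(H)$ via $H\subset\kappa(H)$, whereas the paper writes only the direction $p_\ast\inv(\mathcal{C})\subset\mathcal{K}$ and somewhat loosely records the preimage as $\kappa(\pi(\rho))$ rather than $N_{\kappa(\pi(\rho))}(\pi(\rho))$.
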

\begin{proof}
Suppose $\rho:\Gamma\raw G$ is a representation with $p_\ast(\rho)\in\cal{C}$. This means that $$(p\circ\rho)(\Gamma)=p(\rho(\Gamma))\subset K_{\pi(\rho)}.$$ In particular, we see that $\rho(\Gamma)\subset p\inv(K_{\pi(\rho)})=\kappa(\pi(\rho))$ and therefore $\rho\in \cal{K}$.
\end{proof}

In order to alleviate the notation (with the hope that no confusion shall arise), from now on we will  use $\cal{G}$ (respectively $\cal{C}$) to denote the union of the corresponding bundles (respectively sub-bundles) over all connected components of $\cal{F}$ rather than restricting our attention to $\cal{Z}\subset \cal{F}$.

We can then consider the globally induced morphism of fiber bundles 
$$
p_\ast:\HKN\raw \cal{G}
$$
where, applying Lemma \ref{relationship} component by component, we have $p_\ast\inv(\cal{C})=\cal{K}$. Similarly, applying Proposition \ref{covering map} component by component, we have that $p_\ast$ is a covering map onto a union of connected components of $\cal{G}$. To summarize, we have:

\begin{prop}\label{summary proposition}
	The bundle $\pi_\cal{G}:\cal{G}\raw\cal{F}$ contains a sub-bundle $\pi_\cal{C}:\cal{C}\raw\cal{F}$ such that, for every $H\in\cal{F}$, we have:
	\begin{enumerate}
		\item $\pi_\cal{G}\inv(H)=(G_H)_{s},$ and
		\item $\pi_\cal{K}\inv(H)=K_H.$
	\end{enumerate}
	Here, $K_H$ is a maximal compact subgroup of the reductive group $G_H$ and $(G_H)_s$ denotes the set of semisimple elements in $G_H$.
	Moreover, the bundle morphism $$p_\ast:\HKN\raw \cal{G}$$ is a finite cover onto a union of connected components of $\cal{G}$ with $p_\ast\inv(\cal{C})=\cal{K}$.\qed
\end{prop}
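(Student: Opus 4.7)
The plan is to assemble the Proposition directly from the constituent results already established, verifying only that the component-wise statements globalize.  First I would unpack the fiber descriptions: the bundle $\pi_\cal{G}:\cal{G}\raw\cal{Z}$ was constructed over a component $\cal{Z}\subset\cal{F}$ as the twisted product $(G^\circ\times (G_H)_s)/N_{G^\circ}(H)$, so its fiber over $H\in\cal{Z}$ is $(G_H)_s$ by construction; this gives (1).  The sub-bundle $\cal{C}$ was defined fiberwise by $\cal{C}\cap\pi_\cal{G}\inv(H)=K_H$, and continuity of $\kappa$ (Proposition \ref{compact choice}) ensures that $H\mapsto K_H=N_{\kappa(H)}(H)/H$ varies continuously, so that $\pi_\cal{C}:\cal{C}\raw\cal{Z}$ is genuinely a sub-bundle.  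Taking disjoint unions over the finitely many components of $\cal{F}$ promotes both statements to the global bundles asserted.

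Second, I would verify that $K_H$ is a maximal compact subgroup of $G_H$.  Corollary \ref{maximal compact normalizer} already asserts that $N_{\kappa(H)}(H)$ is a maximal compact subgroup of the reductive group $N_G(H)$.  Passing to the quotient by the finite (hence compact, normal) subgroup $H$, the projection $p:N_G(H)\raw G_H$ sends $N_{\kappa(H)}(H)$ to the compact subgroup $K_H$; maximality is preserved, since any strictly larger compact $K'\subset G_H$ containing $K_H$ would pull back under $p$ to a compact subgroup $p\inv(K')\subset N_G(H)$ (compactness persists because $H$ is finite) strictly containing $N_{\kappa(H)}(H)$, contradicting the corollary.

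Third, the covering statement is simply Proposition \ref{covering map} applied component by component: on each component $\cal{Z}\subset\cal{F}$ the restriction of $p_\ast$ to $\cal{H}=\pi\inv(\cal{Z})$ is a finite cover onto a union of components of the corresponding $\cal{G}_\cal{Z}$, and concatenating over the finitely many components of $\cal{F}$ gives the global claim.  Likewise, the identity $p_\ast\inv(\cal{C})=\cal{K}$ is Lemma \ref{relationship} applied over each component and then unioned.

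There is no genuine obstacle at this stage, as the Proposition is essentially a bookkeeping summary; the one non-trivial verification is the maximal compactness of $K_H$ inside $G_H$, which reduces immediately to Corollary \ref{maximal compact normalizer} combined with the fact that quotienting a reductive group by a finite normal subgroup transports maximal compact subgroups to maximal compact subgroups.
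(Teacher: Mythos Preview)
Your proposal is correct and matches the paper's own approach: the proposition carries a \qed\ tag because it is merely a summary, and the paper's justification is precisely the component-by-component application of Proposition~\ref{covering map} and Lemma~\ref{relationship} that you describe, together with the earlier observation (via Corollary~\ref{maximal compact normalizer}) that $K_H$ is maximal compact in $G_H$.  Your explicit pullback argument for the maximal compactness of $K_H$ spells out what the paper simply asserts, but otherwise the two treatments coincide.
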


\subsection{Proof of the Main Theorem}
With Proposition \ref{summary proposition} at our disposition, we are now ready to complete the proof of our main theorem.  After our applications of Kempf--Ness theory in Section \ref{applying KN}, it suffices to show that $\Hom(\Gamma,K)$ is a deformation retract of $\Hom_{KN}(\Gamma,G)$. 

To begin, recall from Propositions \ref{bundle} and \ref{summary proposition} that we have bundles
$$
\pi:\HKN\raw \cal{F}\text{, and }\pi_\cal{G}:\cal{G}\raw\cal{F},
$$ 
along with respective sub-bundles
$$
\pi:\cal{K}\raw \cal{F}\text{, and }\pi_\cal{C}:\cal{C}\raw\cal{F}.
$$ 
Recall, moreover, that  they are related by a covering map $p_\ast:\HKN\raw \cal{G}$. Our deformation retraction now proceed in three steps:

\emph{Step one}: For every $H\in\cal{F}$, we have a retraction of the fiber $\pi_\cal{G}\inv(H)=(G_H)_s$ onto the fiber $\pi_\cal{C}\inv(H)=K_H$. Since everything in sight is a fiber bundle, this implies that there is a fiber-preserving retraction of $\cal{G}$ onto $\cal{C}$.

\emph{Step two}: Use the covering map $p_\ast:\HKN\raw \cal{G}$ to lift the retraction from step one to a fiber-preserving retraction of $\HKN$ onto the sub-bundle $\cal{K}$.

\emph{Step three}: Lift the deformation retraction of $\cal{F}$ onto $\cal{F}_K$ given by Lemma \ref{manifold of subgroups retraction} to a retraction of $\cal{K}$ onto $\cal{K}\cap\pi\inv(\cal{F}_K)=\Hom(\Gamma,K)$.

This completes the proof of our main theorem.\qed
\bibliographystyle{plain}
\bibliography{bsnote}
\end{document}